\theoremstyle{plain}
\newtheorem{theorem}{Theorem}[section]
 \newtheorem{lemma}[theorem]{Lemma}
\newtheorem{remark}{Remark}[section]
 \def\beqlb{\begin{eqnarray}}\def\eeqlb{\end{eqnarray}}
 \def\beqnn{\begin{eqnarray*}}\def\eeqnn{\end{eqnarray*}}
 \def\qed{\hfill$\Box$\medskip}
\newcommand{\bcen}{\begin{center}}
\newcommand{\ecen}{\end{center}}
\newcommand{\bgeqn}{\begin{equation}}
\newcommand{\edeqn}{\end{equation}}
\begin{document}
\title{Upper deviation probabilities for the range of a supercritical super-Brownian motion}
\author{Shuxiong Zhang}
\maketitle

\renewcommand{\thefootnote}{\fnsymbol{footnote}}

\begin{abstract}

{Let $\{X_t\}_{t\geq 0 }$ be a $d$-dimensional supercritical super-Brownian motion started from the origin with branching mechanism $\psi$. Denote by $R_t:=\inf\{r>0:X_s(\{x\in \mathbb{R}^d:|x|\geq r\})=0,~\forall~0\leq s\leq t\}$ the radius of the minimal ball (centered at the
origin) containing the range of $\{X_s\}_{s\geq 0 }$ up to time $t$. In \cite{Pinsky}, Pinsky proved that condition on non-extinction, $\lim_{t\to\infty}R_t/t=\sqrt{2\beta}$ in probability, where $\beta:=-\psi'(0)$. Afterwards, Engl\"{a}nder \cite{Englander04} studied the lower deviation probabilities of $R_t$. For the upper deviation probabilities,  he \cite[Conjecture 8]{Englander04} conjectured that for $\rho>\sqrt {2\beta}$,

$$
\lim_{t\to\infty}\frac{1}{t}\log\mathbb{P}(R_t\geq \rho t)=-\left(\frac{\rho^2}{2}-\beta\right).
$$
In this note, we confirmed this conjecture.}
\end{abstract}

\bigskip
\noindent Mathematics Subject Classifications (2020): 60J68; 60F15; 60G52.

\bigskip
\noindent\textit{Keywords}: {Range; Feynman-Kac formula; Partial differential equation}

 \section{Introduction}
 In this paper, we consider a $d$-dimensional measure-valued Markov process $\{X_t\}_{t\geq0}$, called super-Brownian motion (henceforth SBM).  For convenience of the reader, we give a brief introduction to the SBM and some pertinent results needed in this article.
 \par
 Let $\ M_F(\mathbb{R}^d)$ be the finite measure space and $\psi$ be a function on $[0,\infty)$ of the form:
  \begin{align}\label{effergde}
\psi(u):=-\beta u+\alpha u^2+\int^{\infty}_0\left(e^{-uy}-1+uy\right)n(dy),~u\geq0,
\end{align}
where $\beta\in\mathbb{R},~\alpha\geq 0$ and $n$ is a $\sigma$-finite measure satisfying
$$
\int^{\infty}_0(y^2\wedge y) n(dy)<\infty.
$$
The function $\psi$ is called a branching mechanism.
  The SBM with initial value $\mu\in M_F(\mathbb{R}^d)$ and branching mechanism $\psi$ is an $M_F(\mathbb{R}^d)$-valued process, whose transition probabilities are characterized through their
Laplace transforms. Write $\nu(f):=\int_{\mathbb{R}^d}f(x)\nu(dx)$ for a (random) measure $\nu$ and positive function $f$. For all $\mu\in\ M_F(\mathbb{R}^d)$ and positive bounded function $g$ on $\mathbb{R}^d$,
\begin{align}\label{4t4rfr453}
\mathbb{E}_{\mu}\left[e^{-X_t(g)}\right]=e^{-\mu(v(t,x))},
\end{align}
where $v(t,x),~t\geq0,~x\in\mathbb{R}^d$ is the unique locally bounded positive solution of
\begin{align}\label{4t4tvf3}
\begin{cases}
\partial_t v=\frac{1}{2}\Delta v-\psi(v),\cr
v(0,x)=g(x).
\end{cases}
\end{align}
$\{X_t\}_{t\geq0}$ is called a supercritical (critical, subcritical) super-Brownian motion if $\beta>0$ $(=0,~<0).$ We refer the reader to \cite{Etheridge,Li11} for an explicit definition and more elaborate discussions on the super-Brownian motion.
In this paper, we only deal with the supercritical case, that is $\beta>0$. Moreover, to simplify the statement, we always assume $X_0=\delta_0$ in the rest of the article.

 \par
In this work, we are interested in the radius of the range of  $\{X_s\}_{s\geq0}$  up to time $t$, which is defined as
 $$R_t:=\inf\{r>0:X_s(B^c(r))=0,~\forall~0\leq s\leq t\},$$
where $B^c(r)$ stands for the complementary set of the ball $B(r):=\{x\in\mathbb{R}^d:|x|<r\}$. Denote by $S:=\{\forall t\geq0, X_t(\mathbb{R}^d)>0 \}$ the survival set of the SBM. It is well-known that under Grey's condition, $\mathbb{P}(S)=1-e^{-\lambda^*}$ (e.g. \cite[Theorem 3.8]{Li11}), where $\lambda^*\in(0,\infty)$ is the largest root of the equation $\psi(u)=0$.
\par
 In \cite[Theorems 1]{Englander04}, Engl\"{a}nder proved that if $\alpha>0,$~$n$ is a zero measure and $\rho\in(0,\sqrt {2\beta})$, then
\begin{align}\label{4t4emy}
\lim_{t\to\infty}\frac{1}{t}\log\mathbb{P}(R_t\leq \rho t|S)=-\beta+\sqrt{\beta/2}\rho,
\end{align}
which is the so-called lower deviation probabilities. He also \cite{Englander04} tried to consider relevant upper deviation probabilities. He found that the upper deviation probabilities of the branching Brownian motion can be deduced from Freidlin \cite{Freidlina}\cite{Freidlinb}. For the the upper deviation probabilities of the super-Brownian motion, he conjectured that it has the same asymptotics as the branching Brownian motion. But different ideas should be used. \par

In the following theorem, we confirm this conjecture.

\begin{theorem}\label{th5} Suppose that there exists some constant $\gamma>0$ such that
\begin{align}\label{r4fk93}
\int^{\infty}_1 y(\log y)^{2+\gamma}n(dy)<\infty.
\end{align}
Assume $\alpha>0$ and $\rho\in(\sqrt {2\beta},\infty)$. Then,
\begin{align}\label{4rer43r4f}
\lim_{t\to\infty}\frac{1}{t}\log\mathbb{P}(R_t\geq \rho t)=-\left(\frac{\rho^2}{2}-\beta\right).
\end{align}
\end{theorem}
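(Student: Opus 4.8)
The plan is to establish the upper bound and the lower bound on $\frac1t\log\mathbb P(R_t\ge\rho t)$ separately, both relying on the log-Laplace equation \reff{4t4tvf3} and the representation \reff{4t4rfr453}.

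\textbf{Upper bound.} I would observe that $R_t\ge \rho t$ forces the support of $X_s$, for some $s\le t$, to reach the sphere of radius $\rho t$. The natural quantity to control is $\mathbb P\big(X_s(B^c(\rho t))>0 \text{ for some }s\le t\big)$. Fix a point $x_0$ with $|x_0|$ slightly less than $\rho t$ and a small ball $B$ around it lying outside $B(\rho t-\varepsilon t)$; by monotonicity and a union bound over a covering of the sphere by $O(t^{d-1})$ such balls (the polynomial factor is harmless after dividing by $t$), it suffices to bound $\mathbb P\big(\exists s\le t: X_s(B)>0\big)$. Taking $g=\theta\ind_B$ in \reff{4t4rfr453} and letting $\theta\to\infty$, $\mathbb P(X_s(B)>0)=1-e^{-w_s(0)}$ where $w_s$ solves \reff{4t4tvf3} with the singular initial/terminal data; since $\beta>0$ and $\alpha>0$, Grey-type bounds give $w_s(0)\le C$, and more importantly, by the Feynman–Kac representation of $w$ and comparison with the linear equation $\partial_t v=\frac12\Delta v+\beta v$, one gets $w_s(0)\le \mathbb E_0^{BM}\big[e^{\beta s}\ind_{B_s\in B}\big]\le e^{\beta s} \mathbb P_0^{BM}(|B_s|\ge \rho t-\varepsilon t)$. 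Using the Gaussian tail $\mathbb P_0^{BM}(|B_s|\ge a)\le e^{-a^2/2s}$ and optimizing $e^{\beta s-( \rho t-\varepsilon t)^2/2s}$ over $s\le t$ (the exponent is increasing in $s$ on the relevant range since $\rho>\sqrt{2\beta}$, so the optimum is $s=t$) yields $\limsup \frac1t\log\mathbb P(R_t\ge\rho t)\le -(\rho^2/2-\beta)+O(\varepsilon)$. Let $\varepsilon\to0$.

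\textbf{Lower bound.} Here I would force a single ``particle'' (a thin spine) to travel ballistically to radius $\rho t$. Concretely, condition on the event that the SBM has at least one descendant lineage following close to the straight path $s\mapsto (\rho s, 0,\dots,0)$ up to time $t$; on this event $R_t\ge\rho t$. The probability of such an event can be estimated from below using the first-moment formula $\mathbb E_{\delta_0}[X_t(f)]=e^{\beta t}\,\mathbb E_0^{BM}[f(B_t)]$ together with a second-moment / Paley–Zygmund argument, or alternatively directly from the PDE by choosing $g$ supported near $(\rho t,0,\dots,0)$ and analyzing $v(t,0)$ via the Feynman–Kac formula $v(t,x)=\mathbb E_x[g(B_t)\exp(\beta t - \int_0^t \frac{\psi(v)+\beta v}{v}\,ds)]$; the correction term is controlled because $\psi(u)+\beta u = \alpha u^2+\int(e^{-uy}-1+uy)n(dy)\ge 0$ is small when $v$ is small, which holds along the relevant path. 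This produces $v(t,0)\gtrsim e^{\beta t}\mathbb P_0^{BM}(B_t\approx(\rho t,0,\dots,0)) \gtrsim \exp\big(\beta t-\rho^2 t/2 - o(t)\big)$, and since $\mathbb P(X_t(g)>0)\ge v(t,0)-o(v(t,0))$ in this regime (again $1-e^{-v}\ge \tfrac12 v$ for small $v$), we get $\liminf\frac1t\log\mathbb P(R_t\ge\rho t)\ge -(\rho^2/2-\beta)$.

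\textbf{Main obstacle.} The delicate point is the lower bound: making rigorous that a macroscopically far-travelling lineage exists with the claimed (exponentially small) probability, despite the fact that the total mass $X_t(\mathbb R^d)$ itself is of order $e^{\beta t}$ and the event is a large deviation for the extremal particle. This is exactly where Engl\"ander noted that ``different ideas should be used'' than in the branching Brownian motion case. I expect the moment condition \reff{r4fk93} on $n$ to enter precisely here: it ensures the jump part of the branching mechanism does not create too many extremely prolific ancestors that would spoil the second-moment estimate, i.e. it controls $\int(\psi(v)+\beta v)/v$ along paths where $v$ is moderately large. A secondary technical nuisance is handling the ``for all $0\le s\le t$'' in the definition of $R_t$ (a supremum over time) rather than a fixed time; for the upper bound this is dealt with by the union bound sketched above and the monotonicity of the relevant hitting probabilities in $s$, while for the lower bound it is automatic since travelling far by time $t$ already makes $R_t$ large.
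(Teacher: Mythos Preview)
Your upper bound contains a genuine gap. The step ``by the Feynman--Kac representation of $w$ and comparison with the linear equation $\partial_t v=\frac12\Delta v+\beta v$, one gets $w_s(0)\le e^{\beta s}\mathbb P_0^{BM}(B_s\in B)$'' is not justified: linear comparison applied to the solution with data $g=\theta\ind_B$ yields only $v_\theta(s,0)\le \theta\,e^{\beta s}\mathbb P_0(B_s\in B)$, which blows up as $\theta\to\infty$. Equivalently, you are asserting $\mathbb P(X_s(B)>0)\le \mathbb E[X_s(B)]$, which is not a consequence of Markov's inequality and is false for general nonnegative random variables; for SBM there is no ``union bound over particles'' to fall back on. Obtaining any finite bound on $w_s(0)=\lim_{\theta\to\infty}v_\theta(s,0)$ requires the nonlinear term $\alpha u^2$, and this is precisely where the real work in the paper lies. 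The paper handles it by passing to the weighted occupation time (so that the ``$\exists s\le t$'' is built in), applying the Feynman--Kac formula with the stopping time $T=\inf\{s:|B_s|=(1-\varepsilon)\rho t\}$, and then bounding the residual factor $v_\lambda(t-T,B_T)$ \emph{uniformly in $\lambda$} by $C_{\varepsilon,\rho,\beta}\,t$ via Engl\"ander's \emph{lower} deviation result \reff{4t4emy}. This is why $\alpha>0$ is assumed.

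You have also inverted the difficulty. The lower bound is the easy half: the paper simply notes $\mathbb P(R_t\ge\rho t)\ge\mathbb P(H_t\ge\rho t)$ for the one-dimensional right-most position $H_t$ and quotes \cite[(1.23)]{Ren21}; the moment condition \reff{r4fk93} enters only to invoke that result. Your spine/second-moment sketch for the lower bound is plausible but unnecessary, and your diagnosis that \reff{r4fk93} is needed to ``control $\int(\psi(v)+\beta v)/v$ along paths where $v$ is moderately large'' in a second-moment argument is not how the condition is actually used.
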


\begin{remark}

 Condition (\ref{r4fk93}) comes from Ren et al. \cite[Theorem 1.2]{Ren21}. The reason is that, in the proof of Theorem \ref{th5} (see (\ref{4trte34f1}) below), we in virtue of the maximum of the one-dimensional super-Brownian motion to derive the lower bound. The assumption $\alpha>0$ is because in the proof of upper bound, we use the lower deviation probabilities of $R_t$. Nevertheless, the assumptions on the branching mechanism is weaker than Pinsky \cite{Pinsky} and Engl\"{a}nder \cite{Englander04}, who assumed $\psi(u)=-\beta u+\alpha u^2$ with $\beta,~\alpha>0$.
\end{remark}

\begin{remark}
 In fact, besides using \cite[Theorem 1.2]{Ren21}, one can also obtain the precise lower bound of Theorem \ref{th5} easily through the comparison lemma; see \cite[Proposition 4]{Englander04}. So, the difficulty lies in the upper bound. To overcome it, we use the the weighted occupation time of the super-Brownian motion to characterize $R_t$, and then the precise upper bound is obtained through the Feynman-Kac formula and lower deviation probabilities of $R_t$.
\end{remark}

\section{Proof of Theorem \ref{th5}}
In this section, we are going to prove Theorem \ref{th5}. Let's give a brief discussion about the
strategy of proofs.  The proof of the theorem is divided into two parts--lower bound and upper bound. The lower bound is easy by using the maximum to control the radius of the range. For the upper bound, we use partial differential equations to portray probabilities, and establish a Feynman-Kac formula to estimate the upper bound of the solution of the PDE. To be specific, by using a stopping time strategy to the Feynman-Kac formula, we found the solution of the PDE can be bounded above be the by the deviation probabilities of the Brownian motion and lower deviation probabilities of $R_t$.
\par
To establish the Feynman-Kac formula, we should first give a formulation of the weighted occupation time $\int^t_0X_s(\phi)ds$, which was first introduced by Iscoe \cite{Iscoe86} for the super-stable process. He then used it to study the supporting properties of super-Brownian motions \cite{Iscoe88}. Li \cite[Section 5.4]{Li11} gives a detailed introduction of the weighted occupation time of a general superprocess.
\par
Let $\mathbb{P}_{\delta_x}$ ($\mathbb{P}_{x}$) be the probability measure under which the SBM $\{X_t\}_{t\geq0}$ (the $d$-dimensional standard Brownian motion $\{B_t\}_{t\geq0}$) starts from the Dirac measure (the point) at $x$. Moreover, we write $\mathbb{P}_{\delta_x}=\mathbb{P}$ and $\mathbb{P}_{x}=\mathbb{P}$ for short. Assume $\phi(x)$ and $g(x)$ are non-negative, bounded and continuous function on $\mathbb{R}^d$. From \cite[Corollary 5.17, Theorem 7.12]{Li11}, we have
\begin{align}\label{4t4rfr453}
\mathbb{E}_{\delta_x}\left[e^{-X_t(g)-\int^t_0X_s(\phi)ds}\right]=e^{-v(t,x)},
\end{align}
where $v(t,x),~t\geq0,~x\in\mathbb{R}^d$ is the unique locally bounded positive solution of
\begin{align}\label{4t4tvf3}
\begin{cases}
\partial_t v=\frac{1}{2}\Delta v-\psi(v)+\phi,\cr
v(0,x)=g(x).
\end{cases}
\end{align}
\par
Now, we are ready to present the Feynman-Kac formula to above partial differential equation. Set
$$k(v):=\frac{\psi(v)}{v},$$
where we make the convention that $k(0)=-\beta$ if $v=0$.
\begin{lemma}\label{6mk1q}(Feynman-Kac formula) Let $v(t,x), t\geq 0, x\in\mathbb{R}^d$ be the unique solution of (\ref{4t4tvf3}). Then, we have
\begin{align}
v(t,x)=\mathbb{E}_x\left[g(B_t)e^{-\int^t_0 k(v(t-r,B_r))dr}+\int^t_0\phi(B_s)e^{-\int^{s}_0k(v(t-r,B_r))dr}ds\right].
\end{align}
\end{lemma}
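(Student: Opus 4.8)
\emph{Overall strategy.} The plan is to read the PDE \reff{4t4tvf3} as a \emph{linear} equation for $v$ in which the nonlinear term $-\psi(v)=-k(v)\,v$ is treated as a zeroth-order coefficient $-k(v)$ — a function that, once $v$ is regarded as known, is merely bounded and space--time dependent — and then to carry out the standard Feynman--Kac verification by stochastic calculus. First I would fix $t>0$ and put $w(s,x):=v(t-s,x)$ for $s\in[0,t]$; differentiating in $s$ and inserting \reff{4t4tvf3}, together with the identity $k(w)\,w=\psi(w)$ (immediate for $w>0$ from the definition of $k$, and for $w=0$ since then both sides vanish because $k(0):=-\beta$), one gets
\begin{align*}
\partial_s w+\tfrac12\Delta w=\psi(w)-\phi=k(w)\,w-\phi\quad\text{on }(0,t)\times\mathbb{R}^d,\qquad w(0,\cdot)=v(t,\cdot),\quad w(t,\cdot)=g.
\end{align*}

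\emph{A priori bounds.} Since $\psi$ is convex with $\psi(0)=0$ and $\psi'(0)=-\beta$, the tangent line at $0$ gives $\psi(u)\ge-\beta u$ for all $u\ge0$, hence $k(u)\ge-\beta$ and $k$ extends continuously to $[0,\infty)$. Comparing \reff{4t4tvf3} with the linear equation $\partial_t\bar v=\tfrac12\Delta\bar v+\beta\bar v+\|\phi\|_\infty$, $\bar v(0,\cdot)\equiv\|g\|_\infty$, gives $0\le v\le e^{\beta t}\|g\|_\infty+\beta^{-1}(e^{\beta t}-1)\|\phi\|_\infty$ on $[0,t]\times\mathbb{R}^d$, so $v$ and $w$ are bounded on that slab; standard parabolic regularity yields $v\in C^{1,2}((0,\infty)\times\mathbb{R}^d)\cap C([0,\infty)\times\mathbb{R}^d)$, which is all the smoothness I need.

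\emph{Itô's formula.} Next I would set $a_s:=\exp\big(-\int_0^s k(w(r,B_r))\,dr\big)$, a continuous finite-variation process with $0<a_s\le e^{\beta t}$ and $da_s=-k(w(s,B_s))\,a_s\,ds$. Applying Itô's formula to $Y_s:=w(s,B_s)\,a_s$ on $[0,t)$ under $\mathbb{P}_x$ and using $\partial_s w+\tfrac12\Delta w=k(w)\,w-\phi$, the finite-variation part telescopes:
\begin{align*}
dY_s=a_s\big(k(w)\,w-\phi-k(w)\,w\big)(s,B_s)\,ds+a_s\nabla w(s,B_s)\cdot dB_s=-a_s\,\phi(B_s)\,ds+a_s\nabla w(s,B_s)\cdot dB_s.
\end{align*}
Hence $M_s:=Y_s+\int_0^s\phi(B_u)\,a_u\,du$ is a continuous local martingale on $[0,t)$ with $M_0=w(0,x)=v(t,x)$. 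As $w$, $a$ and $\phi$ are bounded, $M$ is bounded on $[0,t)$, and by continuity of $w$ up to the boundary $M_s\to g(B_t)\,a_t+\int_0^t\phi(B_u)\,a_u\,du$ a.s.\ as $s\uparrow t$; since a bounded local martingale is a true martingale this gives $v(t,x)=M_0=\mathbb{E}_x\big[g(B_t)\,a_t+\int_0^t\phi(B_u)\,a_u\,du\big]$, which is precisely the asserted identity once $a_t=e^{-\int_0^t k(v(t-r,B_r))\,dr}$ and $a_u=e^{-\int_0^u k(v(t-r,B_r))\,dr}$ are written out.

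\emph{Main obstacle.} The computation itself is the short telescoping above; the only delicate point is the regularity input — that $v$ is a bounded classical solution on each time-slab — and the endpoint $s=t$ when $g$ is merely continuous, where $\nabla w(s,\cdot)=\nabla v(t-s,\cdot)$ may become singular. This turns out to be harmless because $Y$ is differentiated only on the open interval $(0,t)$ and the endpoint is reached through the boundedness/continuity argument rather than through any integral estimate on $\nabla w$. Alternatively, one can simply cite the classical Feynman--Kac representation for the linear parabolic equation $\partial_t v=\tfrac12\Delta v+c\,v+\phi$ with coefficient $c=-k(v)$ bounded above by $\beta$, whose hypotheses have just been verified.
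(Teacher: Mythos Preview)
Your proposal is correct and follows essentially the same route as the paper: apply It\^o's formula to the process $v(t-s,B_s)\,e^{-\int_0^s k(v(t-r,B_r))\,dr}$, observe that the drift telescopes to $-\phi(B_s)\,e^{-\int_0^s k(\cdot)\,dr}\,ds$, and use boundedness of $v$ and $k\ge-\beta$ to upgrade the resulting local martingale to a true martingale before equating expectations at the endpoints. The only cosmetic difference is that the paper obtains the a~priori bound on $v$ from the mild (Duhamel) form of \reff{4t4tvf3} rather than by comparison with a linear equation, and you are somewhat more explicit about regularity and the endpoint $s\uparrow t$.
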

\begin{proof}
We first cast (\ref{4t4tvf3}) into the following mild form (see \cite[Corollary 5.17]{Li11}):
\begin{align}
v(t,x)=\mathbb{E}_{x}\left[g(B_t)\right]+\int^t_0 \mathbb{E}_{x}[\phi(B_s)]ds-\mathbb{E}_{x}\left[ \int^t_0 \psi(v(t-s,B_s))ds\right],
\end{align}
which implies
\begin{align}\label{454t4f}
v(t,x)\leq \| g\|_{\infty} +\| \phi\|_{\infty}t-\left(\min_{u\geq 0}\psi(u)\right) t.
\end{align}
Let $\nabla_x$ be the gradient operator. By It$\hat{\text{o}}$'s formula,
\begin{align}
&d\left[v(t-s, B_s)e^{-\int^s_0 k(v(t-r,B_r))dr}\right]\cr
&=e^{-\int^s_0 k(v(t-r,B_r))dr}dv(t-s, B_s)+v(t-s, B_s)de^{-\int^s_0 k(v(t-r,B_r))dr}\cr
&=e^{-\int^s_0 k(v(t-r,B_r))dr}\left[-\partial_tv(t-s,B_s)ds+\nabla_xv(t-s,B_s)dB_s+\frac{1}{2}\Delta v(t-s,B_s)ds\right]\cr
&~~~-e^{-\int^s_0 k(v(t-r,B_r))dr}\psi(v(t-s, B_s))ds\cr
&=e^{-\int^s_0 k(v(t-r,B_r))dr}\nabla_xv(t-s,B_s)dB_s-e^{-\int^s_0 k(v(t-r,B_r))dr}\phi(B_s).\nonumber
\end{align}
Integrating on both sides gives that
\begin{align}
&v(t-s, B_s)e^{-\int^s_0 k(v(t-r,B_r))dr}-v(t,x)\cr
&=\int^s_0e^{-\int^{s'}_0 k(v(t-r,B_r))dr}\partial_xv(t-s',B_s')dB_{s'}-\int^s_0e^{-\int^{s'}_0 k(v(t-r,B_r))dr}\phi(B_{s'})ds'.
\end{align}
For $s\in[0,t]$, set
\begin{align}
M_s&:=\int^s_0e^{-\int^{s'}_0 k(v(t-r,B_r))dr}\partial_xv(t-s',B_s')dB_{s'}\cr
&=v(t-s, B_s)e^{-\int^s_0 k(v(t-r,B_r))dr}-v(t,x)+\int^s_0e^{-\int^{s'}_0 k(v(t-r,B_r))dr}\phi(B_{s'})ds'.
\end{align}
This, combined with (\ref{454t4f}), implies that $M_s,~s\in[0,t]$ is a bounded martingale. Therefore,
\begin{align}
&\mathbb{E}_{x}\left[g(B_t)e^{-\int^t_0 k(v(t-r,B_r))dr}-v(t,x)+\int^t_0e^{-\int^{s}_0 k(v(t-r,B_r))dr}\phi(B_{s})ds\right]\cr
&=\mathbb{E}_{x}[M_t]=\mathbb{E}_{x}[M_0]=0,\nonumber
\end{align}
which concludes the lemma.
\end{proof}
\par
At this moment, we are ready to prove Theorem \ref{th5}.
\par
\textbf{Proof of Theorem \ref{th5}.} \textbf{Lower bound.} Let $H_t$ be the right-most position of the one-dimensional super-Brownian motion at time $t$. It is simple to see that
\begin{align}\label{4trte34f1}
\mathbb{P}(R_t\geq \rho t)\geq \mathbb{P}(H_t\geq \rho t).
\end{align}
Then, the desired lower bound follows by using \cite[(1.23)]{Ren21}. In fact, \cite[(1.23)]{Ren21} also needs the assumption that there exists some $\theta\in(0,1]$ and $a,~b>0$ such that
$$
\psi(u)\geq -au+bu^{1+\theta}~\text{for}~u\geq0.
$$
But this condition is satisfied, since we have assumed $\alpha>0$.
\par
\textbf{Upper bound.} Let $N$ be a $d$-dimensional standard normal random vector. Then, by the polar coordinates transformation, we have for $z>0$,
\begin{align}
\mathbb{P}(|N|\geq z)&=\int_{|y|\geq z}\frac{1}{(2\pi)^{d/2}}e^{-\frac{|y|^2}{2}}dy\cr
&=\int^{\infty}_{z}\frac{dw_d}{(2\pi)^{d/2}}e^{-\frac{r^2}{2}}r^{d-1}dr,
\end{align}
where $w_d$ is the volume of a $d$-dimensional unit ball. Thus, by L'H$\hat{\text{o}}$pital's rule, we have
$$
\lim_{z\to\infty}\frac{\mathbb{P}(|N|\geq z)}{e^{-\frac{z^2}{2}}z^{d-2}}=1.
$$
Thus, there exists a positive constant $C_d$ depending only on $d$ such that for any $z>1$,
\begin{align}\label{ii3refed}
\mathbb{P}(|N|\geq z)\leq C_d e^{-\frac{z^2}{2}}z^{d-2}.
\end{align}
 For $M>0$, let $f(x)$ be a smooth and non-negative function on $\mathbb{R}^d$ such that $\{x\in\mathbb{R}^d:f(x)>0\}=B^c(M)$. Put $\phi_{\lambda}(x):=\lambda f(x)$ for $\lambda>0$. Denote by $v_{\lambda}(t,x)$ the solution of (\ref{4t4tvf3}) with $\phi=\phi_{\lambda}$, $g=0$. Since $X_s(\phi_{\lambda})$ is right continuous with respect to $s$, we have
\begin{align}\label{4tmjk1}
\mathbb{P}(R_t\geq M)&=\mathbb{P}(\exists s\in [0,t], X_s(B^c(M))>0)\cr
&=\mathbb{P}(\exists s\in [0,t], X_s(\phi_{\lambda})>0)\cr
&=\mathbb{P}\left(\int^{t}_{0}X_s(\phi_{\lambda})ds>0\right)\cr
&=1-\lim_{\lambda\to\infty}\mathbb{E}\left[e^{-\int^{t}_{0}X_s(\phi_{\lambda})ds}\right]\cr
&=1-\lim_{\lambda\to\infty}e^{-v_{\lambda}(t,0)}\cr
&\leq\lim_{\lambda\to\infty}v_{\lambda}(t,0).
\end{align}
Let $u_{\lambda}(t,x)$ be the unique solution of
\begin{align}
\begin{cases}
\partial_t u=\frac{1}{2}\Delta u-(-\beta u+\alpha u^2)+\phi_{\lambda},\cr
u(0,x)=0.
\end{cases}
\end{align}
Since
\begin{align}
\partial_t u-\frac{1}{2}\Delta u+\psi(u)&=\psi(u)-[-\beta u+\alpha u^2]+\phi_{\lambda}\cr
&\geq \phi_{\lambda}\cr
&=\partial_t v-\frac{1}{2}\Delta v+\psi(v),&
\end{align}
by the maximum priciple (see \cite[Lemma 2.2]{Ren21}), we have for any $t,~\lambda\geq0$ and $x\in\mathbb{R}^d$,
$$v_{\lambda}(t,x)\leq u_{\lambda}(t,x).$$
This means, in the following proofs, we can assume $n$ is a zero measure in (\ref{effergde}). Namely, without loss of generality, we assume
$$k(v)=-\beta +\alpha v.$$
 By Lemma \ref{6mk1q},
\begin{align}\label{4r43eb}
v_{\lambda}(t,x)=\int^t_{0} e^{\beta s}\mathbb{E}_x\left[\phi_{\lambda}(B_{s})e^{-\int^{s}_0 \alpha v_{\lambda}(t-r,B_r)dr}\right]ds.
\end{align}
In the following proofs, we sometimes write $v(t,x):=v_{\lambda}(t,x)$ for short. For $\varepsilon\in(0,1\wedge(\sqrt{\beta}/\rho))$, let $T$ be the first time of $\{B_s\}_{s\geq 0}$ to hit the boundary of  $B((1-\varepsilon)M)$, i.e.,
$$T:=\inf\{s\geq0:|B_s|=(1-\varepsilon)M\}.$$
Let $\{B'_s\}_{s\geq0}$ be a independent Brownian motion  of $\{B_s\}_{s\geq 0}$. Note that if $s\leq T$, then $\phi_{\lambda}(B_{s})=0$. This, together with (\ref{4r43eb}), yields that
\begin{align}\label{4r4erm}
v(t,0)&=\int^t_{0} e^{\beta s}\mathbb{E}_0\left[\phi_{\lambda}(B_{s})e^{-\int^{s}_0 \alpha v(t-r,B_r)dr}\right]ds\cr
&=\int^t_{0} e^{\beta s}\mathbb{E}_0\left[\mathbf{1}_{\{T<s\}}\phi_{\lambda}(B_{s})e^{-\int^{s}_0 \alpha v(t-r,B_r)dr}\right]ds\cr
&=\int^t_{0} e^{\beta s}\mathbb{E}_0\left[\mathbf{1}_{\{T<s\}}e^{-\int^{T}_0 \alpha v(t-r,B_{r})dr}\mathbb{E}_{B_T}\left[\phi_{\lambda}(B'_{s-T})e^{-\int^{s}_T \alpha v(t-r,B'_{r-T})dr}\right]\right]ds\cr
&\leq\mathbb{E}_0\left[\int^t_{0} e^{\beta s}\mathbf{1}_{\{T<s\}}\mathbb{E}_{B_T}\left[\phi_{\lambda}(B'_{s-T})e^{-\int^{s}_T \alpha v(t-r,B'_{r-T})dr}\right]ds\right],
\end{align}
where third equality follows from the strong Markov property of Brownian motion. By the Fubini's theorem, (\ref{4r4erm}) yields that
\begin{align}\label{45lpia}
v(t,0)
&\leq\mathbb{E}_0\left[\mathbf{1}_{\{T\leq t\}}\int^t_{T} e^{\beta s}\mathbb{E}_{B_T}\left[\phi_{\lambda}(B'_{s-T})e^{-\int^{s-T}_0 \alpha v(t-T-r,B'_{r})dr}\right]ds\right]\cr
&=\mathbb{E}_0\left[\mathbf{1}_{\{T\leq t\}}\int^{t-T}_{0} e^{\beta (T+\eta)}\mathbb{E}_{B_T}\left[\phi_{\lambda}(B'_{\eta})e^{-\int^{\eta}_0\alpha v(t-T-r,B'_{r})dr}\right]d\eta\right]\cr
&=\mathbb{E}_0\left[\mathbf{1}_{\{T\leq t\}}e^{\beta T}v_{\lambda}(t-T,B_T)\right].
\end{align}
Since $g=0$, from (\ref{4t4rfr453}), it is simple to see that $v_{\lambda}(t,x)$ is increasing with respect to $\lambda$ and $t$. Thus, if we let $M=\rho t$, then there exists a positive constant $C_{\varepsilon,\rho,\beta}$ depending only on $\varepsilon,~\rho$ and $\beta$ such that for $t$ large enough,
\begin{align} \label{3r3re3}
v_{\lambda}(t-T,B_T)&\leq v_{\lambda}(t,B_T)\cr
&\leq \lim_{\lambda\to\infty}v_{\lambda}(t,B_T)\cr
&=-\log\mathbb{P}_{B_T}(\forall s\in[0,t], X_s(B^c(M))=0)\cr
&\leq-\log\mathbb{P}_{\delta_0}(R_t\leq \varepsilon \rho t)\cr
&\leq -\log\left[\mathbb{P}_{\delta_0}(R_t\leq \varepsilon \rho t|S)\mathbb{P}(S)\right]\cr
&\leq C_{\varepsilon,\rho,\beta}t,
\end{align}
where the first equality follows from similar arguments to obtain the last equality of (\ref{4tmjk1}) and the last inequality comes from (\ref{4t4emy}). For $1\leq i\leq d$, let $B_s^{(i)}$ be the $i$-th component of the vector $B_s$. It is easy to see that
\begin{align}
\max_{s\in [0,t]}|B_s|&=\max_{s\in [0,t]}\left[\sum^d_{i=1}(B^{(i)}_s)^2\right]^{1/2}\cr
&\leq \left[\sum^d_{i=1}(\max_{s\in [0,t]} |B^{(i)}_s|)^2\right]^{1/2}\cr
&= \left[\sum^d_{i=1}\left[(\max_{s\in [0,t]} B^{(i)}_s)\vee (-\min_{s\in [0,t]} B^{(i)}_s)\right ]^2\right]^{1/2}.
\end{align}
Since $\max_{s\in [0,t]} B^{(i)}_s$ and $-\min_{s\in [0,t]} B^{(i)}_s$ have the same distribution $|B^{i}_t|$, it follows as above that if $(1-\varepsilon) \rho\sqrt {t}>1$, then
\begin{align}\label{3rryux}
&\mathbb{P}(T\leq t)\cr
 &=\mathbb{P}\left(\max_{s\in [0,t]}|B_s|\geq (1-\varepsilon)M\right)\cr
 &\leq 2^{d}\mathbb{P}\left(\left[\sum^d_{i=1}\left(\max_{s\in [0,t]} B^{(i)}_s\right)^2\right]^{1/2} \geq (1-\varepsilon)M \right)\cr
 &= 2^{d}\mathbb{P}\left(\left[\sum^d_{i=1}\left(B^{(i)}_t\right)^2\right]^{1/2} \geq (1-\varepsilon)M \right)\cr
 &= 2^{d}\mathbb{P}\left(|B_t| \geq (1-\varepsilon)M \right)\cr
 &= 2^{d}\mathbb{P}\left(|B_t|/\sqrt t \geq (1-\varepsilon)\rho\sqrt {t} \right)\cr
 &\leq  2^{d} C_d e^{-\frac{[(1-\varepsilon)\rho\sqrt {t}]^2}{2}}[(1-\varepsilon) \rho\sqrt {t}]^{d-2},
\end{align}
where the last inequality comes from (\ref{ii3refed}). Plugging (\ref{3r3re3}) and (\ref{3rryux}) into (\ref{45lpia}) yields that for $t$ large enough,
\begin{align}\label{454r6n}
v(t,0)&\leq e^{\beta t} \mathbb{P}\left(T\leq t\right)C_{\varepsilon,\rho,\beta}t\cr
&\leq e^{\beta t}2^{d} C_d e^{-\frac{\left[(1-\varepsilon)\rho\sqrt {t}\right]^2}{2}}\left[(1-\varepsilon)\rho\sqrt {t}\right]^{d-2}C_{\varepsilon,\rho,\beta}t.
\end{align}
Plugging (\ref{454r6n}) into (\ref{4tmjk1}) and then taking limits yields that
$$
\limsup_{t\to\infty}\frac{1}{t}\log\mathbb{P}(R_t\geq \rho t)\leq-\left(\frac{(1-\varepsilon )^2{\rho}^2}{2}-\beta \right).
$$
The desired upper bound follows by letting $\varepsilon\to0$.\qed





\vspace{2cm}
\noindent{\bf ~~E-mail:}~shuxiong.zhang@mail.bnu.edu.cn\\
{\bf Address:}~School of Mathematics and Statistics, Anhui Normal University, Wuhu, China.

\end{document}